\theoremstyle{plain}    
  \newtheorem{Thm}{Theorem} 
	\newtheorem{Lem}{Lemma}      
	\newtheorem{Cor}{Corollary}
  \newtheorem{Prop}{Proposition}        
\theoremstyle{definition}
  \newtheorem{Def}{Definition}              
\theoremstyle{remark}
\def \rootedmeet {T_1\cap_r T_2}
\begin{document}
\author{Daniel Selahi Durusoy} 
\title{Crowell's state space is connected}
\address{Michigan State University}
\address{durusoyd@math.msu.edu}

\begin{abstract}We study the set of Crowell states for alternating 
knot projections and show that for prime alternating knots the space of states for a reduced projection is connected, a result similar to that for Kauffman states.  As an application we give a new proof of a result of Ozsv\'ath and Szab\'o characterizing $(2,2n+1)$ torus knots among alternating knots.
\end{abstract}

\keywords{State sum, Alexander polynomial, spanning trees}

\maketitle \setlength{\unitlength}{.62mm} 

\section{Introduction}

\noindent
One of the many definitions of the Alexander polynomial of a knot is through state sums. 
Kauffman has described and studied a state sum model for the Alexander polynomial
in great detail \cite{Kauffman}. 
In an earlier paper Crowell has described another state sum model
for the Alexander polynomial for the subclass of alternating knots
(\cite{Crowell59}, Theorem 2.12). 

In the next section we will recall the definition of Crowell states 
 and examine some of their properties. 
In Section~\ref{sec:proof} we will prove that

\begin{Thm} \label{thm:connected}
If $K$ is an alternating prime knot and $D$ is a reduced knot diagram for $K$, 
then any two states differ by a finite sequence of terminal edge exchanges.
\end{Thm}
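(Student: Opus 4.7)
The plan is to induct on the number of edges on which two states $T_1$ and $T_2$ disagree, reducing the theorem to a single inductive step: whenever $T_1 \ne T_2$ there is a terminal edge exchange on $T_1$ producing a new state $T_1'$ whose rooted intersection $T_1' \cap_r T_2$ strictly contains $\rootedmeet$. Here I think of $\rootedmeet$ as the maximal sub-arborescence on which $T_1$ and $T_2$ agree, equivalently the set of vertices whose directed paths back to the root of the Crowell graph $G_D$ coincide in the two states. Iterating such a step finitely many times carries $T_1$ to $T_2$, which is what the theorem asserts.

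To build the exchange, I would consider the forest $T_1 \setminus \rootedmeet$ and pick a leaf $\ell$ of $T_1$ lying outside $\rootedmeet$, which exists since $T_1 \ne T_2$ and every nonempty finite subtree has a leaf. Because $\ell$ is a leaf, its unique tree edge can be swapped for an alternative edge at $\ell$ without disturbing the rest of $T_1$, so any such swap is a terminal edge exchange in the sense of the theorem. The natural candidate is the $T_2$-edge at $\ell$: if the $T_2$-parent of $\ell$ already lies in $\rootedmeet$ the swap enlarges $\rootedmeet$ by the vertex $\ell$ and the induction advances.

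The main obstacle, and the step where the hypotheses on $D$ really enter, is guaranteeing the existence of a leaf $\ell$ of $T_1$ whose $T_2$-parent sits in $\rootedmeet$, since a priori the $T_2$-paths of $T_1$-leaves can wander far from $\rootedmeet$. My plan for locating such an $\ell$ is to start at an arbitrary vertex $u$ of $T_1 \triangle T_2$ and trace $T_2$-edges toward the root until first entering $\rootedmeet$; the vertex $v$ immediately before entry has its $T_2$-parent in $\rootedmeet$ by construction. If $v$ is already a $T_1$-leaf we exchange there and win; otherwise I would descend within the $T_1$-subtree hanging off $v$, performing preliminary terminal exchanges that strip leaves from it and eventually expose a boundary vertex at which the main swap becomes legal. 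Reducedness of $D$ rules out nugatory crossings and hence cut-edges of $G_D$ that would force a unique parent at some vertex, while primeness of $K$ upgrades this to a $2$-connectivity property of $G_D$ which supplies the multiple candidate parents that this descent requires.

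The combinatorial heart of the argument is therefore a lemma asserting that, under the primeness and reducedness hypotheses, the descent above always terminates in a vertex where a legal terminal edge exchange strictly enlarges $\rootedmeet$. I expect this lemma to be verified by a local case analysis at each vertex of the descent, using the $2$-connectivity of $G_D$ to exhibit an admissible alternative edge compatible with the orientation constraints coming from the alternating structure; once it is in hand, the induction outlined above closes the proof.
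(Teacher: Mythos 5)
Your overall strategy coincides with the paper's: grow the rooted common part $T_1\cap_r T_2$ one vertex at a time by selecting a vertex $w$ outside it whose $T_2$-parent lies inside it, first emptying the $T_1$-subtree hanging below $w$ by preliminary terminal exchanges, and then swapping in the $T_2$-edge at $w$. That reduction of the theorem to a single enlarging step is correct, and your observation that such a $w$ necessarily has distinct incoming edges in $T_1$ and $T_2$ matches the paper's.

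The gap is exactly the step you flag as the ``combinatorial heart,'' and it is not a formality: the entire content of the theorem lives there. When you strip a leaf $\ell$ of the subtree $Bel(w,T_1)$ below $w$, the only available exchange replaces $\ell$'s tree edge by the unique other incoming edge at $\ell$, and that edge may originate from another vertex of $Bel(w,T_1)$; in that case the exchange merely reshuffles the subtree, and an unguided ``strip leaves until done'' descent need not shrink it or terminate. What is required is the paper's Lemma~\ref{lem:findw'}: some vertex $w'\in Bel(w,T_1)$ has its alternative incoming edge originating \emph{outside} $Bel(w,T_1)$. This does not follow from $2$-connectivity of $G(D)$ alone, which is how you propose to get it; it is an orientation statement, proved in the paper by a planar argument. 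Primeness and reducedness force the boundary of a component of a neighborhood of $Bel(w)$ to meet $D$ in at least four points (it cannot be a separating circle with only two intersections), the checkerboard orientation of Proposition~\ref{prop:coloring} then forces at least two of these intersections to be edges oriented into the component, so at least one such edge comes from a vertex other than $w$; a further separation argument using the circuit $IS(w)\cup\{w\rightarrow\phi(w)\}\cup IS(\phi(w))$ disposes of the second component of $Bel(w)$. Even granted this, the vertex $w'$ so produced need not be a leaf, so one must first recursively empty $Bel(w')$; the paper organizes this as an induction on the depth of $Bel(w)$, which is what guarantees termination and that only edges below $w$ (hence disjoint from the rooted meet) are ever removed. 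Your proposal names none of these mechanisms, so as it stands the key inductive step is asserted rather than proved.
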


This theorem is similar in nature to the Clock Theorem of Kauffman \cite{Kauffman} 
which states that any two Kauffman states differ by a finite sequence of clockwise and counterclockwise moves, 
which was also proven in the language of graphs in section 4 of \cite{GilmerLit86}. 
This work is independent of those mentioned 
 because of the simple reason that Kauffman states 
 and Crowell states do not correspond to each other in any natural way
 as observed from the fact that the space of Crowell states do not form a lattice in general (see Proposition~\ref{prop:notlattice}).

In section \ref{sec:application}, as an application we will give an alternative proof that $(2,2n+1)$ torus knots are characterized by their Alexander polynomials among alternating knots, which was originally proven by Ozsv\'ath and Szab\'o (Proposition 4.1 in \cite{OzSz}).

\section{A state model} \label{sec:states}

In this section we will review the definition of the state sum model 
for alternating links given by Crowell \cite{Crowell59} and investigate some properties of the states.

Given a knot $K$ and an oriented alternating diagram $D$ of $K$
with $n$ crossings we obtain 
a weighted labeled directed planar graph $G(D)$ as follows: 
replace a small neighborhood of each crossing by a degree $4$ vertex according to the following figure ($k$ is the vertex label):

%
%
\begin{center} 
\begin{figure}[ht]
\includegraphics{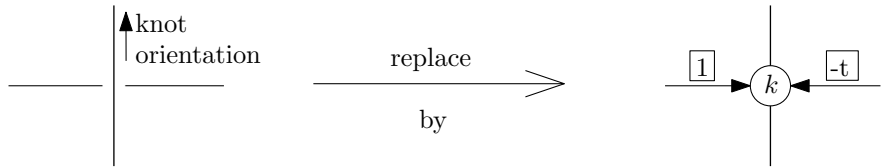} 
\caption{ From a knot diagram to a directed graph. }
\end{figure}
\end{center}

\begin{Prop}\label{prop:coloring} This definition orients all edges and
these orientations are compatible with orientations coming from a checkerboard coloring of the regions in the complement of $D$.
\end{Prop}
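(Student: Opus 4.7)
The plan is to derive both assertions of the proposition simultaneously by reducing to a local verification at a single crossing. Fix a checkerboard coloring of the regions of $S^2 \setminus D$, shading one of the two color classes. The central claim I would establish is that the prescription in Figure 1 always places a shaded region on one fixed side, say the left, of every oriented half-edge. Once this is known, the two half-edges that together form a single edge of $G(D)$ must carry the same orientation, since both endpoint rules require the same adjacent shaded region to lie on the same side. This simultaneously shows that every edge receives a well-defined orientation and that this orientation agrees with the one prescribed by the checkerboard coloring.

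To carry out the local check, I would enumerate the pictures at a single crossing up to symmetry. Because the diagram is alternating, the over/under information at any crossing is a single binary choice, and the checkerboard coloring places the two shaded regions in a pair of opposite quadrants. Together with the orientations of the two strands through the crossing, this reduces Figure 1 to a small number of pictures, in fact a single picture up to rotation and reflection. In that picture one reads off directly the arrows prescribed by the crossing rule and compares them with the shaded quadrants to verify the local claim. This is an inspection rather than a calculation.

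The main potential obstacle is the edge-wise consistency between the two endpoints of an edge: nothing in the statement of the rule a priori forbids the two endpoint assignments from disagreeing. Routing the argument through the checkerboard coloring removes precisely this difficulty, because the shaded side of an edge is an intrinsic attribute of the edge that does not depend on which endpoint one views it from. Hence the local verification at a single crossing subsumes the global consistency, and the proof is essentially complete after that one inspection.
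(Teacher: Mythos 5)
Your overall strategy---reduce everything to a local picture at one crossing and let the checkerboard coloring carry the global consistency---is reasonable in outline, but the step you present as a routine enumeration is exactly where the content of the proposition lives, and as stated it has a gap. The claim that every crossing of the diagram, \emph{together with its shaded quadrants}, is ``a single picture up to rotation and reflection'' is not a local fact about one crossing. At an isolated crossing the shaded pair of opposite quadrants can sit in either of two positions relative to the over-strand, and which position occurs at a given crossing of $D$ is dictated by the global checkerboard coloring, not by anything visible in a neighborhood of that crossing. The statement that this position is the \emph{same} at every crossing of an alternating diagram is true, but it is a global uniformity statement whose proof requires propagating along the diagram: one follows an edge from one crossing to the next and uses the fact that the strand switches from over to under (this is where alternation actually enters---it is a relation between consecutive crossings, not a property of a single one). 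In other words, your ``local inspection'' silently assumes the very consistency you are trying to establish. There is also a smaller but real problem: a picture determined only ``up to reflection'' cannot certify a handedness statement such as ``the shaded region lies on the left,'' since reflection exchanges left and right.

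The paper's proof supplies precisely the missing propagation argument, organized around region boundaries rather than along the knot: traveling around a single region, the successive strands alternate between under- and over-strands, so the crossing rule orients the boundary edges of that region coherently, and the two ends of each edge agree because an edge is an over-strand at one end and an under-strand at the other; compatibility with the checkerboard coloring then follows because the two regions on either side of an edge induce opposite orientations on it. If you want to keep your formulation via ``shaded region always on the left,'' you can, but you must first prove the uniformity of the shaded quadrants' position at all crossings by such a propagation argument---at which point your proof and the paper's are essentially the same argument in different clothing.
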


\begin{proof}
Each edge gets an orientation since while traveling around a region the strands traveled alternate between under strand and over strand. Hence at each crossing if an edge is coming in, the boundary of the region will continue along the over strand, which  becomes an under strand at its other end, hence we get an orientation on that edge as well, consistent with the previous edge. These orientations are compatible with a checkerboard coloring since crossing a region to another across an edge we get opposite orientations in the plane. 
\end{proof}

%
%

Choose a vertex $v_0$ of $G(D)$. Spanning trees rooted at $v_0$ (edges are directed away from $v_0$) will be called states. Let $Tr(v_0)$ be the space of states and $w(T)$ be the product of weights of all edges in a state $T$. According to \cite[Theorem 2.12]{Crowell59} we get the renormalized Alexander polynomial as a sum of monomials corresponding to each state by
\begin{equation}
 \displaystyle \Delta_K(t) = (-t)^m \cdot \sum_{T\in Tr(v_0)}w(T)
 \label{eq:statesum}
\end{equation}
where $m$ is chosen so that the term with the least power of $t$ is a positive constant.

\begin{Prop}\label{prop:hamiltonpath}
 For any vertex $v$, there is a rooted Hamiltonian path from $v_0$ to $v$ in $G(K)$. 
\end{Prop}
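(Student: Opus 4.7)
My plan is to proceed by induction on the number of vertices of $G(D)$, i.e., on the crossing number of $D$. The base case (one vertex) is immediate. For the inductive step, fix a target $v$ and choose an in-edge $e = u\to v$ of $v$; the goal is to produce a Hamiltonian path from $v_0$ to $u$ in a suitable smaller graph $G'$ and then to extend it by $e$.

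The natural reduction producing $G'$ is to smooth a well-chosen crossing $w \neq v_0, v$ in the orientation-respecting way: delete $w$ and its four incident edges, and reconnect the two in-edges to the two out-edges in the planar-compatible pairing. By Proposition~\ref{prop:coloring} each vertex of $G(D)$ has in-degree and out-degree $2$, with checkerboard-oriented face boundaries; the smoothing operation preserves all of these local features. Provided $w$ is chosen so that $G'$ remains connected with $v_0$ and $v$ in the same component (and still arises from an alternating projection, possibly of a link rather than a knot), the inductive hypothesis applied to $G'$ yields a Hamiltonian path from $v_0$ to $v$, into which $w$ is then reinserted at the point where the merged edge appears.

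The main obstacles I expect are (i) guaranteeing that such a smoothing exists without destroying either connectedness or the alternating structure, and (ii) ensuring that the Hamiltonian path in $G'$ actually uses the merged edge, so that $w$ can be spliced back in. To handle (i), I would strengthen the inductive hypothesis to the broader class of planar digraphs with in-degree and out-degree $2$ whose faces are consistently checkerboard-oriented directed cycles, decoupling the argument from needing $D'$ itself to be a knot. To handle (ii), I would exploit the freedom to choose the in-edge $e$ and, if necessary, the vertex $w$, using the local structure around $v$ and $w$ in the planar embedding.

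As a fallback (or alternative) I would try a direct topological construction: starting at $v_0$, follow the boundary of one checkerboard region as a directed cycle, and, at each vertex shared with a not-yet-visited region, choose whether to detour into that region. Planarity together with the alternating condition should allow this traversal to be steered to cover every vertex and to terminate at any prescribed $v$. The hard part in either approach is the same in spirit: arranging the local choices globally so that every vertex is reached exactly once and the walk ends at the prescribed target.
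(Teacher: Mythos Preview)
Your proposal is aimed at the wrong target. In this paper ``rooted Hamiltonian path'' is used in a nonstandard way: as the proof and all subsequent uses (Corollary~\ref{cor:extendtree}, Proposition~\ref{prop:statewithprescribedterminalvertex}) make clear, what is actually meant and needed is merely a \emph{simple directed path} from $v_0$ to $v$ in $G(D)$, not a path visiting every vertex. With that reading, the paper's argument is essentially a two-line construction: take any undirected path from $v_0$ to $v$ (which exists by connectedness of $G(D)$); whenever an edge $e_i$ on it points the wrong way, replace $e_i$ by the remaining edges on the boundary of one of the two faces adjacent to $e_i$, which by Proposition~\ref{prop:coloring} are all oriented consistently in the desired direction; finally, shortcut any repeated vertex by deleting the segment between its first and last occurrence. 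No induction and no smoothings are needed.

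Your plan, by contrast, attempts to prove the much stronger standard-Hamiltonian statement, and even as a plan it has real gaps. For the inductive route, obstacle~(ii) is fatal as stated: after smoothing a crossing $w$, there is no reason the Hamiltonian path in $G'$ must use either of the merged edges, and without that you cannot splice $w$ back in; you give no mechanism to force this. Obstacle~(i) is also nontrivial, since a generic oriented smoothing of a knot diagram produces a two-component link diagram, so your inductive class must include link shadows, and the broadened class of planar in/out-degree~$2$ digraphs with checkerboard-oriented faces does not obviously admit Hamiltonian paths between arbitrary prescribed endpoints. Your fallback traversal idea is closer in spirit to the paper's face-detour trick, but steering such a walk to hit \emph{every} vertex exactly once and terminate at a prescribed $v$ is a global constraint you have not addressed. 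In short: reinterpret the proposition as ``there exists a directed simple path from $v_0$ to $v$,'' and the short face-detour argument above suffices.
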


\begin{proof}
 Since $K$ is a knot, $G(K)$ as an unoriented graph is connected. 
 Pick an unoriented path $e_1, e_2, \ldots, e_m$ starting at $v_0$, ending at $v$. 
 If $e_i$ is not oriented away from the root,
   pick edges $e_{i,1}, e_{i,2}, \ldots e_{i,m_i}$ that go around one of the two regions adjacent to $e_i$. 
 Due to Proposition~\ref{prop:coloring}, we get compatible orientations. 
 At the end we get a rooted path which might visit some vertices more than once. 
 For each vertex that is visited more than once, remove all edges between the first and last visits.
\end{proof}

\begin{Cor} \label{cor:extendtree}
Any rooted tree $T$ can be extended to a rooted spanning tree $\tilde{T}$.
\end{Cor}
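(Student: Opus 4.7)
My plan is to argue by induction on the number of vertices of $G(D)$ that do not belong to $T$. The base case, when this number is zero, is trivial: $T$ is already spanning and we take $\tilde{T}=T$. For the inductive step, I want to attach a short rooted path to $T$ that grabs at least one new vertex while preserving the rooted tree structure, after which induction finishes.

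To carry this out, pick any vertex $v\notin V(T)$ and invoke Proposition~\ref{prop:hamiltonpath} to obtain a rooted path $P$ from $v_0$ to $v$ in $G(D)$. Since $v_0\in V(T)$ while $v\notin V(T)$, there is a well-defined last vertex $w$ of $P$, traversed from $v_0$ to $v$, that lies in $V(T)$, and $w\neq v$. Let $P'$ denote the tail subpath of $P$ from $w$ to $v$. By the choice of $w$, every vertex of $P'$ other than $w$ itself is new to $T$, so adjoining the edges of $P'$ to $T$ attaches a disjoint directed chain at the single vertex $w$ and therefore creates no cycle. Moreover, since $P$ is rooted at $v_0$, the edges of $P'$ are oriented away from $w$, and $w$ is already reachable from $v_0$ by a directed path in $T$; hence every vertex of $T\cup P'$ remains reachable from $v_0$ along directed edges. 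Thus $T\cup P'$ is a strictly larger rooted tree, and the inductive hypothesis applies.

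The only genuinely delicate point is verifying that gluing $P'$ onto $T$ at $w$ yields a tree rather than a graph with a cycle, which is the step where one actually uses the specific choice of $w$ as the \emph{last} vertex of $P$ lying in $V(T)$. I do not anticipate any serious obstacle beyond this bookkeeping, since Proposition~\ref{prop:hamiltonpath} already supplies the directed connectivity needed to reach each missing vertex compatibly with the root.
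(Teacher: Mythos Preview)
Your proof is correct and follows essentially the same approach as the paper: pick a vertex $v\notin T$, invoke Proposition~\ref{prop:hamiltonpath} to get a rooted path from $v_0$ to $v$, and adjoin its final segment (from the last vertex of the path lying in $T$) to enlarge $T$. Your version is more explicit about the induction and the verification that no cycle arises, but the underlying idea is identical.
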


\begin{proof}
For any vertex $v$ not in $T$, find a rooted Hamiltonian path $\alpha$ 
 from $v_0$ to $v$ using Proposition~\ref{prop:hamiltonpath}.
Add enough of the final segment of $\alpha$ to the current tree so that
 the union will be connected.
\end{proof}

\begin{Prop}\label{prop:statewithprescribedterminalvertex} 
 Given a reduced alternating diagram for a prime knot, any edge in $G(D)$
 except those ending at $v_0$ appear as a teminal edge in at least one state.
\end{Prop}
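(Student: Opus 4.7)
Let $e = (u, v)$ be an edge of $G(D)$ with $v \neq v_0$. The plan is to construct a rooted spanning tree $T$ of $G(D)$ containing $e$ in which $v$ is a leaf, so that $e$ is a terminal edge of $T$.

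\emph{Step 1 (reachability avoiding $v$).} The key technical input I plan to prove is a strengthening of Proposition~\ref{prop:hamiltonpath}: in the subgraph $G(D)\setminus\{v\}$ obtained by deleting $v$ together with its four incident edges, every vertex is reachable from $v_0$ along a directed path. My plan is to adapt the proof of Proposition~\ref{prop:hamiltonpath}: start from any unoriented path in $G(D)\setminus\{v\}$ from $v_0$ to the target vertex $w$, which exists because for a prime knot with reduced alternating diagram $v$ is not a cut vertex of $G(D)$. Then reroute each misoriented edge around whichever of its two adjacent regions of $G(D)$ does \emph{not} contain $v$ on its boundary. Proposition~\ref{prop:coloring} ensures the reroute has consistent orientation, and by construction the resulting path avoids $v$ entirely.

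\emph{Step 2 (an edge swap).} With the reachability statement of Step 1 in hand, let $G'$ be the subgraph of $G(D)$ obtained by removing only the two outgoing edges at $v$. In $G'$ the vertex $v$ has out-degree $0$, so any directed path in $G'$ ending at some vertex $w \neq v$ cannot pass through $v$. Combined with Step 1, this shows $v_0$ reaches every vertex of $G'$, so $G'$ admits a rooted spanning arborescence $T_0$ from $v_0$ (built, e.g., as in Corollary~\ref{cor:extendtree}). In $T_0$ the vertex $v$ is a leaf whose unique incoming edge is some $e'' = (u'', v)$. If $e'' = e$ we are done; otherwise set
\[
T \;=\; (T_0 \setminus \{e''\}) \cup \{e\}.
\]
Since $v$ was a leaf of $T_0$, removing $e''$ merely detaches the isolated vertex $v$ from the rest of $T_0$, and adding $e$ reattaches $v$ to the (still connected) tree with $u$ as its new parent. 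Thus $T$ is a rooted spanning tree of $G(D)$, $v$ is a leaf of $T$, and $e$ is its terminal edge.

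\emph{Main obstacle.} The heart of the argument is Step 1, and specifically the structural claim that at each rerouting one can always find an adjacent region of the edge to be rerouted that avoids $v$. For general alternating diagrams this can fail near a nugatory crossing (violating reducedness) or near a connect-sum sphere (violating primality), where an edge becomes sandwiched between two regions that both touch $v$. The plan is to rule out these configurations by invoking the standard fact that the Tait (checkerboard) graphs of a reduced prime alternating diagram are $2$-vertex-connected and loopless, translating the rerouting requirement into this purely combinatorial property. Once Step 1 is established, Step 2 is essentially formal.
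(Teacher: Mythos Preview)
Your proposal is correct and can be completed as you outline. The one claim that needs care is the structural fact in Step~1: if $e_i$ is an edge not incident to $v$, then at most one of its two adjacent regions has $v$ on its boundary. This does follow from the hypotheses. By the checkerboard coloring the two regions across $e_i$ have opposite colors, so if both touched $v$ they would occupy adjacent corners at $v$ and hence share an edge $e'$ incident to $v$; a simple closed curve crossing only $e_i$ and $e'$ then meets the diagram in exactly two points with crossings on both sides, contradicting primality. (Reducedness is used separately to guarantee that $v$ is not a cut vertex, so the initial undirected path in $G(D)\setminus\{v\}$ exists, and also that no region meets $v$ in two corners.) With this established, your reroute-around-a-single-region procedure yields a directed walk from $v_0$ to any $w\neq v$ avoiding $v$, and loop removal gives the desired path.

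The paper's proof follows the same underlying principle---reroute around regions to avoid the designated vertex---but is organized more constructively. It first builds a rooted path from $v_0$ to the tail $w_0$ of the prescribed edge, then, if that path passes through $w_1$, replaces the two edges at $w_1$ by a detour around a \emph{union} of neighboring regions $\hat R$; after appending $e_0$, it must still separately connect the two out-neighbors of $w_1$ to the tree without using the outgoing edges, again via a region argument, before finally invoking Corollary~\ref{cor:extendtree}. Your approach packages all of this into the single reachability statement ``every vertex is reachable from $v_0$ in $G(D)\setminus\{v\}$'', from which the existence of an arborescence in $G'$ and the final edge swap are essentially formal. This is more modular and avoids the paper's somewhat delicate step of handling the out-neighbors $w_1',w_1''$ by hand; the cost is that the region-avoidance claim must be proved for \emph{every} edge of the auxiliary path rather than just at the single vertex $w_1$, but as you note this is exactly the $2$-connectivity of the Tait graphs and is no harder.
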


\begin{proof}
Call the given edge $e_0$ starting at vertex $w_0$, ending at vertex $w_1$. 
We will find a directed Hamiltonian path from $v_0$ to $w_0$, 
 add the edge $e_0$, 
 and extend this path into a rooted spanning tree.

Use Proposition~\ref{prop:hamiltonpath} to construct a rooted Hamiltonian path $\Gamma$ from $v_0$ to $w_0$. There are two possible cases:

\noindent (a) $\Gamma$ doesn't go through $w_1$: 
Do nothing extra. 

\noindent (b) $\Gamma$ goes through $w_1$ before reaching $w_0$: 
Adding $e_0$ to $\Gamma$ produces a loop. 
To avoid this problem we will go around as follows. 
Assume $\Gamma$ contains an edge $e'$ going from $v'$ to $w_1$ and an edge $e''$ going from $w_1$ to $v''$. 
We need to connect $v'$ to $v''$ by an oriented path avoiding $e'$ and $e''$. 
To achieve this, let $R$ be the region bounded on two sides by $e'$ and $e''$ 
 and $\hat{R}$ be the union of regions adjacent to $R$ along edges other than $e'$ and $e''$. 
Then starting at $v'$, following edges on the boundary of  $\hat{R}$ that are not 
 on the boundary of $R$, we reach $v''$. 
Assume that $K$ is prime and $D$ is reduced, then
 this new path $\alpha$ does not include $e'$ and $e''$ since otherwise
 we could draw a separating circle passing through the $v'$ or $v''$ and another common edge of $R$ and $\hat{R}$.
Replacing $e'$ and $e''$ by $\alpha$ in $\Gamma$,
 we get a rooted path from $v_0$ to $w_0$ avoiding $w_1$. 
It could include loops, which can be eliminated as in the proof of Proposition~\ref{prop:hamiltonpath}.

Next we need to extend the rooted Hamiltonian path $\Gamma \cup e_0$ 
 to a rooted spanning tree, keeping $e_0$ a terminal edge.
Consider the two edges coming out from $w_1$, call them $e_1'$ and $e_1''$,
 with terminal vertices $w_1'$ and $w_1''$. 
Using a similar argument as in the proof of case (b) above, 
 consider $R$ being the region bounded by $e_0$ and $e_1'$,
 and $\hat{R}$ the union of regions adjacent to $R$ except along $e_0$ and $e_1'$,
 and removing loops we get a directed path $\beta$ from $w_1$ to $w_1'$. 
 Starting at $w_1'$, add enough edges from the final segment of $\beta$
 so that $w_1'$ is connected to a vertex in $\Gamma \cup e_0$.
Similarly do so for $w_1''$.

Now, use Corollary~\ref{cor:extendtree} to extend this tree to a rooted spanning tree. 
During this process $e_0$ stays a terminal edge since adding $e_1'$ or $e_1''$ would create a loop.
\end{proof}

The state sum in Equation~\ref{eq:statesum} resembles the state sum defined by Kauffman \cite{Kauffman}. Kauffman has studied an operation called {\it clock move} that transforms a state to another that differ only at two crossings and showed that all states differ from one another by a sequence of clock moves. 
With that in mind we define the following operation for reduced alternating diagrams:
	
\begin{Def}  
A state $T_2$ is obtained from a state $T_1$ by a {\em terminal edge exchange} move (edge exchange for short) if replacing a terminal edge in $T_1$ by the other incoming edge at the terminal vertex gives $T_2$. 
\end{Def}

\begin{center} 
\begin{figure}[ht]
 \includegraphics{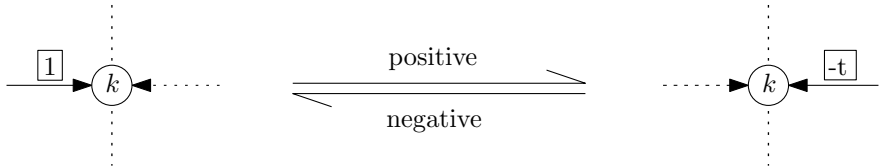} 
 \caption{ Terminal edge exchange. }
\end{figure}
\end{center}

\begin{Prop}\label{prop:terminaledges} 
At any terminal edge, edge exchange gives a new state, except at edges ending at a kink.
\end{Prop}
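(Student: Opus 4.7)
The plan is to verify directly from the combinatorial/graph description that the exchange produces another rooted spanning tree. Let $e=(u,w)$ be the terminal edge of $T_1$ that we want to swap out, so $w$ is a leaf of $T_1$. By the construction of $G(D)$ from the crossing pictures, each vertex has exactly two incoming edges, so there is a unique ``other'' incoming edge $e'=(u',w)$ at $w$, and the proposed new state is $T_2:=(T_1\setminus\{e\})\cup\{e'\}$.

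First I would observe that $e'\notin T_1$: in a rooted spanning tree, every non-root vertex has exactly one incoming tree edge, namely the last edge of its unique rooted path from $v_0$; for the leaf $w$ that edge is $e$, so $e'$ is a genuinely new edge and $T_2\neq T_1$.

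Next I would handle the generic case $u'\neq w$. Removing the terminal edge $e$ from $T_1$ disconnects it into exactly two pieces: the isolated vertex $\{w\}$ and the rooted subtree $T_1\setminus\{w\}$. Since $T_1$ is spanning we have $u'\in T_1\setminus\{w\}$, so adding $e'=(u',w)$ reconnects the forest. The result has $n-1$ edges on $n$ vertices and is connected, hence a spanning tree; the orientation-away-from-$v_0$ condition is preserved because the old edges are untouched and $e'$ points from the root component into the reattached leaf $w$. Finally $w$ still has no outgoing tree edge, so $e'$ is itself a terminal edge of $T_2$ and $T_2$ is a state.

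The third step is to recognise the failure case $u'=w$, i.e.\ when $e'$ is a self-loop at $w$, as exactly the kink case. A self-loop at $w$ means that a single edge of $G(D)$ meets $w$ at both endpoints, which by Proposition~\ref{prop:coloring} means that one of the two checkerboard regions incident to the crossing has its boundary returning to the same crossing without visiting any other; this is precisely the Reidemeister-I loop configuration, i.e.\ a kink. In that case $T_2$ contains the loop $e'$ and is not a tree. The main point that requires a bit of care is this last geometric identification of self-loops with kinks, together with the bookkeeping that at a non-kink vertex the four edge-endpoints at $w$ genuinely come from four distinct edges; the tree-theoretic verification is routine once one has the leaf/forest picture in hand.
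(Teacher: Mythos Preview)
Your proof is correct and follows essentially the same approach as the paper. The paper's own argument is extremely terse---it simply observes that at a kink the other incoming edge is a loop, and that ``at any other vertex, it is easy to check that one still gets a rooted spanning tree''---while you have carefully written out the leaf/forest bookkeeping and the geometric identification of self-loops with kinks that make that ``easy check'' precise.
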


\begin{proof} If there is a kink at $v$, there is a unique edge that  connects to $v$ in any spanning tree, since the opposite edge is a loop. At any other vertex, it is easy to check that one still gets a rooted spanning tree.
\end{proof}

Edge exchange gives a partial order on the set of states by 
 defining the covering relation of the partial order as 
 $T_1$ comes immediately before $T_2$ if $T_2$ is obtained from $T_1$ by one positive edge exchange. 

Comparing these states with the black trees in Kauffman states, 
 even though states are rooted spanning trees in both models, 
 in Kauffman states the orientations on the edges are chosen after a spanning tree of the black graph is chosen, so the same edge can inherit different orientations in different states. 
Furthermore, consider the graph whose 
 vertices are Crowell states and 
 any two vertices are connected by an edge if there is a terminal edge exchange that takes one state to the other. 
The following proposition shows that edge exchanges do not correspond to clock moves
 under any bijection between the Crowell and Kauffman states 
 since Kauffman states form a distributive lattice \cite{Kauffman}.

\begin{Prop}\label{prop:notlattice}
 The space of Crowell states is not a lattice in general with any choice of a partial order compatible with terminal edge exchanges. 
\end{Prop}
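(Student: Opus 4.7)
The plan is to exhibit a single explicit reduced alternating prime diagram whose edge-exchange graph on states cannot be the Hasse diagram of any lattice. I would pick a small diagram — a two-bridge knot such as $5_2$ or $6_2$ — small enough that all rooted spanning trees can be enumerated by hand, but large enough to avoid the trivial state spaces of the trefoil and figure-eight. After constructing the directed graph $G(D)$, choosing a root $v_0$, and listing every rooted spanning tree $T$, I would draw the graph $\mathcal{G}$ whose vertices are these states and whose edges connect two states that differ by a single terminal edge exchange, as guaranteed by Proposition~\ref{prop:terminaledges}.

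To rule out every compatible partial order simultaneously, the goal is to locate a structural obstruction in $\mathcal{G}$ that survives any acyclic orientation of its edges. The cleanest obstruction is to display two states $s_1, s_2$ together with two further states $u_1, u_2$ such that $u_1$ and $u_2$ are each reachable from both $s_1$ and $s_2$ by single edge exchanges, while $u_1$ and $u_2$ are themselves not joined by any sequence of edge exchanges that lies below a common state — a copy of $K_{2,2}$ sitting in $\mathcal{G}$ with no diamond closing it off above. Any partial order whose covering relations are (a subset of) the terminal edge exchanges then forces $s_1 \vee s_2$ to be one of $u_1$ or $u_2$, but neither dominates the other, so the join cannot be unique. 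Alternatively, one can produce the dual configuration witnessing the failure of a meet, or exhibit two maximal states (states from which no terminal edge exchange is available in the preferred direction), contradicting the uniqueness of the top element of a finite lattice.

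The main obstacle is verifying that some honest small example actually realizes one of these obstructions; this is a finite but tedious check involving listing spanning trees, tracing which pairs differ by a single terminal edge exchange, and confirming that the resulting graph contains the claimed non-lattice configuration. I expect the argument to be delivered as a picture: the diagram $D$, the graph $G(D)$, the list of states, and the edge-exchange graph $\mathcal{G}$ with the offending subconfiguration highlighted, together with a one-line observation that no orientation of the displayed edges yields unique joins (or meets) for the marked pair. The rest of the proposition then follows because compatibility with terminal edge exchanges is a condition on the covering relations alone, so the obstruction is independent of the particular partial order chosen.
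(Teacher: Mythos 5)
Your overall strategy --- exhibit one explicit diagram whose edge-exchange graph admits no compatible lattice order, by finding a structural obstruction that survives every acyclic orientation --- is exactly the paper's strategy. But the proposal stops short of delivering the counterexample, and the two candidate obstructions you sketch both have problems. First, the choice of example matters more than you suggest: the paper uses $7_6$, not a five- or six-crossing two-bridge knot, and this is likely not an accident. For $(2,2n+1)$ torus knots the state space is a linear chain (this is the content of Section~\ref{sec:application}), and small twist knots may well have state graphs that are paths or genuine lattices, just as Kauffman state spaces always are. So ``pick $5_2$ or $6_2$ and check'' is a plan that may simply fail; until the finite verification is actually carried out on a diagram that works, there is no proof.

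Second, your primary obstruction (a $K_{2,2}$ between $\{s_1,s_2\}$ and $\{u_1,u_2\}$ with no diamond above) only forces a contradiction when the partial order happens to orient all four edges upward from the $s_i$ to the $u_j$. Since the proposition quantifies over \emph{all} compatible partial orders, you must also handle orientations in which, say, $s_1<u_1$ but $s_1>u_2$; your argument that the join of $s_1$ and $s_2$ is forced to equal both $u_1$ and $u_2$ then breaks down. Your fallback obstruction (two maximal states) has the same issue: which states are maximal depends on the orientation, so you cannot point at two specific states and declare them both maximal for every order. The clean, orientation-independent version --- and the one the paper uses --- is a counting argument on degree-one vertices of the exchange graph: every degree-one vertex must be either a local maximum or a local minimum under \emph{any} compatible order, so if the graph has at least three degree-one vertices, the pigeonhole principle forces two maximal or two minimal elements, contradicting the uniqueness of top and bottom in a finite lattice. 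The graph of states for $7_6$ has exactly three degree-one vertices, which is what makes the example work. You have all the ingredients nearby, but you need (i) a verified example and (ii) the pigeonhole formulation of the obstruction to make the quantification over all compatible orders go through.
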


\begin{proof}
Figure~\ref{fig:76states} illustrates the graph of Crowell states for the knot $7_6$ in Rolfsen's table. 
Let us assume that there is a partial order compatible with this graph, 
 i.e., an edge between two states exist if one is an immediate successor of the other. 
Then any degree one vertex is either a local maximum or a local minimum. 
Since this graph has three degree~1 vertices and 
 in a finite lattice there is only one local maximum and only one local minimum, 
 this particular graph can not be a lattice.
\end{proof}
%
%
\begin{center} 
\begin{figure}[ht]
\includegraphics{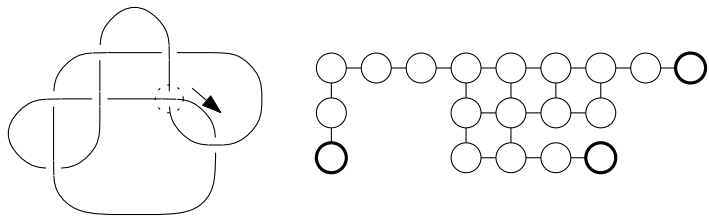}     
\caption{ The knot $7_6$, the chosen root, and its space of states. }  \label{fig:76states}
\end{figure}
\end{center}

%
%
%
%
\section{Proof of Theorem \ref{thm:connected}} \label{sec:proof}
In this section we will assume that $K$ is a prime knot and 
 $D$ is a reduced alternating projection for $K$. 
Choose a root vertex $v_0$ in $G(D)$.
We will provide an algorithm 
to go from one rooted spanning tree $T_1$ to another rooted spanning tree $T_2$ through a sequence of edge exchanges. 
We will label vertices $v$ of $G(D)$ with distinct integers. 

An initial segment $IS(v,T)$ of a rooted spanning tree $T$ is 
 the sequence of vertices on the unique rooted path from the root to $v$ in $T$.
For $v\neq v_0$, let $\phi(v,T)$ denote the vertex that points to $v$ through an edge not in $T$.
Let $Bel(v,T)$ be a small neighborhood of the set of vertices below $v$ in $T$, 
 i.e., those that can be reached from $v$ via directed paths in $T$, 
 the edges between them (not necessarily in $T$) and the elementary regions surrounded by those edges. 
Let $Bel_1(w,T)$ be the connected component of $Bel(w,T)$ containing the successor of $w$ in $T$ with the smaller label.
When the tree $T$ is obvious from the context, we will suppress $T$ from these notations.
The rooted meet of two rooted trees $T_1$ and $T_2$ 
  is the connected component of the root in $T_1 \cap T_2$ and will be denoted by $\rootedmeet$. 

%
%
\begin{center} 
\begin{figure}[ht]
\includegraphics{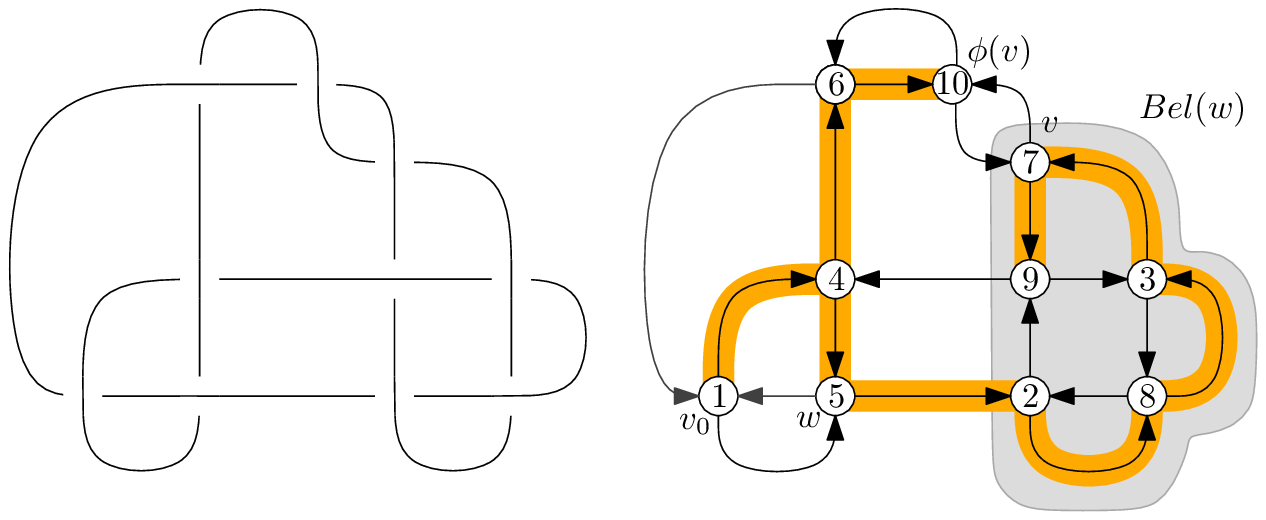}      \label{fig:belw}
\caption{ The knot diagram $D$ and a state $T$ in $G(D)$. }
\end{figure}
\end{center}

In order to prove Theorem~\ref{thm:connected}, we will show that for any given two states $T_1$ and $T_2$, 
  we can persistently enlarge $\rootedmeet$. 

%
%
\begin{Lem}\label{lem:removebeloww} 
Given a state $T$ and a vertex $w$ other than the root $v_0$, 
if $\phi(w,T)\notin Bel(w,T)$,   
then
there is a sequence of edge exchanges 
  that converts the incoming edge for $w$ into a terminal edge, 
  removing edges only below $w$.
\end{Lem}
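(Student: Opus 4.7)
The plan is to induct on the number of descendants of $w$ in $T$, or equivalently on $|Bel(w,T)|$. In the base case $w$ has no descendants, the incoming edge to $w$ is already terminal, and the empty sequence of exchanges suffices.

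For the inductive step I aim to perform a single edge exchange, carried out at a leaf strictly below $w$, that decreases the number of descendants of $w$; the inductive hypothesis will then finish the job. Concretely, I want to exhibit a leaf $\ell$ in the subtree of $T$ rooted at $w$ whose non-$T$ incoming edge $\phi(\ell,T)$ exits $Bel(w,T)$. Given such an $\ell$, the edge exchange at $\ell$ replaces the $T$-parent of $\ell$ by $\phi(\ell,T)$, yielding a new state $T'$ in which $\ell$ is no longer below $w$. Because the exchange only deleted the edge leading into a leaf below $w$, the vertex $w$ and its ancestor path in $T$ are untouched, so $\phi(w,T')=\phi(w,T)\notin Bel(w,T')\subseteq Bel(w,T)$; thus the hypothesis of the lemma survives and the inductive hypothesis applies to $T'$, giving a sequence of edge exchanges (still only below $w$) that makes the incoming edge of $w$ terminal.

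The main obstacle, and the geometric content of the lemma, is producing such a leaf $\ell$. Here I would use the planar picture: $Bel(w,T)$ is a union of closed disk regions whose topological boundary passes through $w$, and because $\phi(w,T)$ lies outside, the non-$T$ edge from $\phi(w,T)$ into $w$ crosses this boundary. Tracing around the boundary of $Bel(w,T)$ and using the compatibility of edge orientations with a checkerboard coloring (Proposition~\ref{prop:coloring}) together with the in/out alternation at each $4$-valent vertex, one locates a leaf of the subtree rooted at $w$ whose other incoming edge also exits $Bel(w,T)$; reducedness of $D$ and primeness of $K$, exploited as in Proposition~\ref{prop:statewithprescribedterminalvertex}, rule out the degenerate configurations that would otherwise trap every such $\phi(\ell,T)$ inside $Bel(w,T)$.

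If no such $\ell$ is immediately visible in $T$, my fallback is to first perform preparatory edge exchanges entirely inside $Bel(w,T)$ by applying the inductive hypothesis to smaller sub-neighborhoods $Bel(u,T)$ for descendants $u$ of $w$, reshaping the subtree below $w$ until a suitable leaf becomes available. The hardest step throughout is the planar/combinatorial existence argument for $\ell$, since this is where the global hypothesis $\phi(w,T)\notin Bel(w,T)$ must be turned into a concrete local choice of a leaf whose other incoming edge leaves $Bel(w,T)$.
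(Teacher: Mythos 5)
Your inductive skeleton (clear out everything below $w$ one vertex at a time, checking that the hypothesis $\phi(w,\cdot)\notin Bel(w,\cdot)$ survives each exchange) is sound and matches the paper's, but the entire content of the lemma is concentrated in the step you defer: producing a vertex below $w$ at which an exchange actually shrinks $Bel(w,T)$. Two concrete problems. First, you ask for a \emph{leaf} $\ell$ of the subtree below $w$ with $\phi(\ell,T)\notin Bel(w,T)$. Nothing guarantees such a leaf exists in $T$ as given, and the paper does not claim it does: its Lemma~\ref{lem:findw'} only produces a vertex $w'\in Bel(w,T)$ --- typically an internal vertex of the subtree --- with $\phi(w')\notin Bel(w,T)$, and the proof of the present lemma then first applies the induction hypothesis to $w'$ (legitimate because $Bel(w')\subseteq Bel(w)$, so $\phi(w')\notin Bel(w')$) to turn $w'$ into a terminal vertex before exchanging at it. Your ``fallback'' gestures at exactly this, but you never identify which descendant $u$ to recurse on nor why $\phi(u,T)\notin Bel(u,T)$ holds for it; that hypothesis is exactly what must be arranged, and it is supplied only by the separate existence lemma.

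Second, the existence of even such a vertex $w'$ is asserted, not proved: ``tracing around the boundary \ldots one locates'' is where all the geometry lives. The paper's argument is specific: $Bel(w,T)$ splits into at most two components $Bel_1(w)$ and $Bel_2(w)$, one for each tree-successor of $w$; primeness of $K$ and reducedness of $D$ force $\partial Bel_1(w)$ to meet $D$ in at least four points; the checkerboard-compatible orientations (Proposition~\ref{prop:coloring}) then yield at least two edges of $G$ entering $Bel_1(w)$, hence at least one not emanating from $w$, and its head is $w'$; finally a separate separation argument, using the circuit formed by $IS(w)\cup\{w\to\phi(w)\}\cup IS(\phi(w))$, is needed to rule out $\phi(w')\in Bel_2(w)$. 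Your sketch invokes the right ingredients but carries out none of these steps, and in particular never confronts the fact that $Bel(w,T)$ may be disconnected, which is why the escape of $\phi(w')$ from the \emph{other} component requires its own argument. As written, the proposal is an outline whose decisive step is missing.
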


\begin{Lem}\label{lem:findw'}
Under the conditions of Lemma \ref{lem:removebeloww}, there is a vertex $w'\in Bel(w,T)$ with $\phi(w')\notin Bel(w,T)$.
\end{Lem}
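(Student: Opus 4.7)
The plan is to argue by contradiction, deriving a 2-edge cut in $G(D)$ that violates primality of the diagram $D$. Suppose that $\phi(v) \in Bel(w, T)$ for every $v \in Bel(w, T)$, and let $S$ denote the set of strict descendants of $w$ in $T$, so that $S$ is the vertex content of $Bel(w,T)$. For each $v \in S$, its tree parent lies in $S \cup \{w\}$, and by the contradiction hypothesis $\phi(v) \in S$ as well. Therefore the only edges of $G(D)$ entering $S$ from outside $S$ are the tree edges from $w$ to its $c$ tree-children. Since every vertex of $G(D)$ has in-degree $2$ and out-degree $2$, balance of in- and out-degrees on $S$ forces exactly $c$ edges to exit $S$, so $S$ is separated from its complement by a $2c$-edge cut. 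Because $w$ itself has out-degree $2$, we have $c \in \{1, 2\}$.

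In the case $c = 1$, this is a 2-edge cut and corresponds to a simple closed curve crossing $D$ at two arcs. The $S$-side contains the crossing $v_1$, while the other side contains the distinct crossings $w$ and $v_0$, so each side contains crossings. Since $K$ is a prime alternating knot and $D$ is reduced, $D$ is a prime diagram in the sense of Menasco--Thistlethwaite, and hence admits no such 2-point separator; contradiction.

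In the case $c = 2$, let $T_1, T_2$ be the subtrees rooted at the two children $v_1, v_2$ of $w$. If either $V(T_i)$ is closed under $\phi$, i.e.\ $\phi(v) \in V(T_i)$ for every $v \in V(T_i)$, then the identical counting applied to $V(T_i)$ in place of $S$ produces a 2-edge cut of $V(T_i)$ (one entering tree edge $w \to v_i$, balanced by one exit), yielding the same primality contradiction. The remaining sub-case is that neither $V(T_i)$ is closed under $\phi$, so there are non-tree edges from $V(T_1)$ to $V(T_2)$ and vice versa. Here I would use the planar structure of $G(D)$: the cyclic order at $w$ alternates in-out-in-out, placing the tree edges $w \to v_1$ and $w \to v_2$ in opposite angular sectors and hence $V(T_1), V(T_2)$ on opposite sides of the ``$w$-axis'' spanned by the two in-edges of $w$. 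Any non-tree edge linking $V(T_1)$ with $V(T_2)$ must wind around $w$ through a face adjacent to one of these in-edges; tracking where such windings can occur, together with the reducedness of $D$, produces a 2-edge cut between a proper subset of $S$ and its complement, again contradicting primality.

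The main obstacle is this final sub-case. The first two cases follow immediately from the counting combined with Menasco--Thistlethwaite, whereas the case $c = 2$ with both subtrees entwined by cross-$\phi$ edges requires a careful planarity analysis at $w$ to locate the violating 2-edge cut.
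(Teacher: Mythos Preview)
Your argument has a genuine gap: the final sub-case ($c=2$ with cross-$\phi$ edges between the two subtrees) is left unfinished, and you flag it yourself. The deeper symptom is that you never use the hypothesis $\phi(w,T)\notin Bel(w,T)$, and that hypothesis is precisely what disposes of the troublesome sub-case.

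Here is the missing idea. Because $\phi(w)\notin Bel(w)$, neither $IS(w)$ nor $IS(\phi(w))$ contains any strict descendant of $w$, so the union $IS(w)\cup\{\phi(w)\!\to\! w\}\cup IS(\phi(w))$ contains a simple closed circuit $C$ in the plane passing through $w$ via both of its in-edges. Since the four edges at $w$ alternate in--out--in--out cyclically, the two out-edges of $w$ lie on opposite sides of $C$. Each vertex set $V(T_i)$ is connected by tree edges, none of which meets $C$, so $V(T_1)$ and $V(T_2)$ lie in distinct complementary regions of $C$. Any edge of $G(D)$ joining $V(T_1)$ to $V(T_2)$ would have both endpoints off $C$ but in different regions, which is impossible in a planar embedding. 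Thus your ``entwined'' sub-case is vacuous, and the argument collapses to the easy cases you already handled.

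This is exactly the mechanism in the paper's proof, though organized constructively rather than by contradiction: one restricts attention to a single component $Bel_1(w)$, applies primality to its boundary curve to get at least four crossings with $D$ and hence (by alternation) at least two incoming edges, takes $w'$ to be the endpoint of an incoming edge not originating from $w$, and then invokes the same circuit $C$ to certify $\phi(w')\notin Bel_2(w)$. A minor further point: your passage from ``$2$-edge cut of $S$'' to ``simple closed curve meeting $D$ twice'' is not automatic for arbitrary vertex subsets and deserves a sentence (here both $S$ and its complement are spanned by subtrees of $T$, hence connected, which is what makes it work); the paper sidesteps this by defining $Bel$ as a planar region from the outset and reading the cut directly off its boundary.
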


%
\begin{center} 
\begin{figure}[ht]
\includegraphics{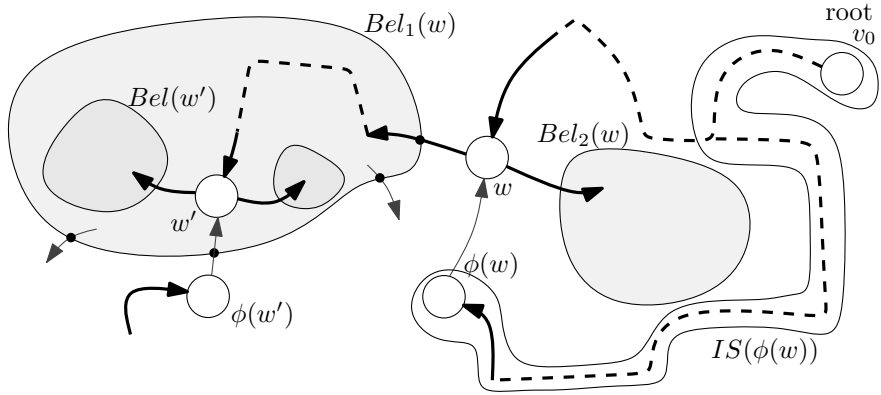}   
\caption{ Thick edges belong to the spanning tree $T$. }  \label{fig:findw'}
\end{figure}
\end{center}

%
%
\begin{proof}[Proof of Lemma~\ref{lem:findw'}]
Assume $Bel(w)$ is nonempty. Given $w$, consider $D \cap \partial Bel_1(w)$. 
Since $K$ is prime and $D$ is reduced, 
  $\partial Bel_1(w)$ is not a separating circle, 
  hence there are at least $4$ intersections. 
Since the orientations of adjacent regions alternate, 
  we get at least two edges entering into $Bel_1(w)$.

Pick a vertex $w'$ among all terminal vertices of edges in $G$ entering into $Bel_1(w)$ not originating from $w$.
This choice implies that $\phi(w')\notin Bel_1(w)$. 
Furthermore the union $IS(w) \cup \{ w \rightarrow \phi(w) \} \cup IS(\phi(w))$ 
  contains an unoriented circuit of edges and vertices 
	  that separate $Bel_1(w)$ from $Bel_2(w)$ (see Figure~\ref{fig:findw'}),
	hence going from $w'$ to a vertex in $Bel_2(w)$ would take at least two edge exchanges.
	We conclude that $\phi(w')\notin Bel_2(w)$ as well. 
\end{proof}

\begin{proof}[Proof of Lemma~\ref{lem:removebeloww}]
If $Bel(w)$ is empty, then $w$ is already a terminal vertex. 
Otherwise, we will use induction on the depth $d$ of the tree $Bel(w)$.

For $d=1$, $Bel(w)$ could contain up to two vertices. 
If there is only one vertex, it is a terminal vertex and an edge exchange empties $Bel(w)$. 
If there are two vertices, $Bel(w)$ has two components, 
  which as in the proof of Lemma~\ref{lem:findw'}, are not adjacent to one another. 
Lemma~\ref{lem:findw'} tells that an edge exchange at either vertex decreases the size of $Bel(w)$, 
 and we are led to the case of one vertex.

Assume the hypothesis is true for all trees $Bel(w)$ of depth $d$ and less.
If $Bel(w)$ has depth $d+1$, use Lemma~\ref{lem:findw'} to find a $w'$
 with the property $\phi(w')\notin Bel(w)$, in particular $\phi(w')\notin Bel(w')$.
Therefore by the induction hypothesis $w'$ becomes a terminal vertex
 after a finite sequence of edge exchanges only removing edges in $Bel(w')$.
Then performing an edge exchange at $w'$ decreases the size of $Bel(w)$.
Hence repeating this process $w$ becomes a terminal vertex
 while only edges below $w$ being removed throughout the process.
\end{proof}

\begin{proof}[Proof of Theorem \ref{thm:connected}] Given two distinct rooted spanning trees $T_1$ and $T_2$,
pick a vertex $w$ adjacent to $\rootedmeet$ along an edge in $T_2$.
Note that $w\neq v_0$ since $v_0\in \rootedmeet$ 
 and once $w$ is a terminal edge, 
 after an edge exchange $w$ will be connected  to $v_0$ 
 along the same rooted Hamiltonian path as in $T_2$.

Let $v_1, v_2$ be the vertices that lead to $w$ in $T_1$ and $T_2$ respectively. 
By definition, $v_2=\phi(w,T_1)$ and $v_2\in \rootedmeet$. 
Hence $IS(\phi(w,T_1),T_1)\subset T_1\cap_r T_2$, 
  but $w\notin \rootedmeet$, 
	hence $w \notin IS(\phi(w,T_1),T_1)$, 
  which means $\phi(w,T_1)\notin Bel(w,T_1)$.

Applying Lemma \ref{lem:removebeloww}, 
  we get a sequence of edge exchanges that ends in a state where $w$ is a terminal vertex 
  without removing any edges from the rooted meet. 
Now perform an edge exchange at $w$, this enlarges the rooted meet. 
Since the rooted meet only enlarges during this process, 
 in finitely many repetitions of this process we reach $T_2$.
\end{proof}

%
%
%
%
\section{An application to $(2,2n+1)$ torus knots} \label{sec:application}

In this section we will provide a different proof of the following result originally proved by Ozsv\'ath and Szab\'o:
\begin{Thm} The $(2,2n+1)$ torus knots are characterized among alternating knots by the Alexander polynomial.
\end{Thm}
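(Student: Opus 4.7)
The plan is to use Crowell's state sum together with Theorem~\ref{thm:connected} to force the diagram of $K$ to be the standard one. Let $D$ be a reduced alternating diagram of $K$ with $c$ crossings, and fix a root $v_0$ of $G(D)$. Since $\Delta_{T(2,2n+1)}(t)=\sum_{k=0}^{2n}(-1)^{k}t^{k}$ has all coefficients of absolute value $1$, and for alternating knots the Crowell states contribute to the state sum with consistent signs so that no cancellation occurs, the state space $Tr(v_0)$ contains exactly $2n+1$ states, with exactly one state $T_k$ of weight-degree $k$ for each $k\in\{0,1,\ldots,2n\}$.

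Next I would examine the graph $\Gamma$ whose vertices are the Crowell states and whose edges are the terminal edge exchanges. At each non-root vertex of $G(D)$ the two incoming edges carry weights that differ by a factor of $\pm t^{\pm 1}$, so each edge exchange shifts the weight-degree of the state by exactly $1$. Combined with the connectedness of $\Gamma$ furnished by Theorem~\ref{thm:connected}, and with the fact that each degree from $0$ to $2n$ is realized by a unique state, this forces $\Gamma$ to be the linear chain $T_0, T_1, \ldots, T_{2n}$ with consecutive states adjacent. In particular the endpoint states $T_0$ and $T_{2n}$ each have valence $1$ in $\Gamma$.

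I would then translate this into geometry of $G(D)$. By Proposition~\ref{prop:terminaledges}, since $D$ is reduced, the valence of a state $T$ in $\Gamma$ equals the number of terminal vertices of $T$. Hence $T_0$ and $T_{2n}$ each have a unique leaf and therefore are directed Hamiltonian paths in $G(D)$. Moreover $T_0$ uses only weight-$\pm 1$ edges while $T_{2n}$ uses only weight-$\pm t$ edges, so $T_0$ and $T_{2n}$ are edge-disjoint, and their union consists of every edge of $G(D)$ except the two incoming edges at $v_0$. This rigidly constrains the planar $4$-valent directed graph $G(D)$; I would argue that, up to the planar embedding and the checkerboard orientation of Proposition~\ref{prop:coloring}, the only possibility is the Crowell graph of the standard $(2,2n+1)$-torus knot diagram. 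The Tait flyping theorem of Menasco--Thistlethwaite then identifies $D$ with that standard diagram and yields $K=T(2,2n+1)$.

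The main obstacle is the last rigidity step: going from two edge-disjoint directed Hamiltonian paths in $G(D)$ of complementary weight type to a full identification of $G(D)$ as a planar graph. A more manageable route is induction on $n$: use the unique leaf of $T_0$ to locate a twist region of $D$, simplify the diagram there to obtain a reduced alternating diagram of a knot $K'$ with $\Delta_{K'}=\Delta_{T(2,2n-1)}$, and apply the inductive hypothesis to conclude $K'=T(2,2n-1)$ and hence $K=T(2,2n+1)$. The base case $n=0$ is Crowell's theorem that an alternating knot with trivial Alexander polynomial is the unknot.
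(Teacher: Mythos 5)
Your setup follows the paper's strategy closely: the distinct-weight count of $2n+1$ states, the degree shift of $\pm 1$ under each edge exchange, the forced linear chain, and the identification of $T_0$ and $T_{2n}$ as Hamiltonian paths are exactly the steps the paper takes. But there are two genuine gaps before the point where you stop. First, Theorem~\ref{thm:connected} applies only to \emph{prime} alternating knots, and you never verify that $K$ is prime; the paper gets this from the observation that $\Delta_K$, having all coefficients of absolute value $1$, cannot factor as a product of two alternating-knot Alexander polynomials (citing Ozsv\'ath--Szab\'o). Second, your claim that $T_0$ consists entirely of weight-$1$ edges and $T_{2n}$ entirely of weight-$(-t)$ edges is asserted rather than proved; the paper derives it from Proposition~\ref{prop:statewithprescribedterminalvertex} (every edge not pointing at $v_0$ is terminal in some state, which is reached from $T_0$ by positive exchanges) together with the fact that each positive exchange retires one weight-$1$ edge permanently, and this is also what pins the crossing number down to $2n+1$ so that your edge count works.

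More seriously, the final rigidity step --- which you yourself flag as the main obstacle --- is where the paper's proof does its real work, and neither of your proposed routes closes it. The paper's device is to move the root from $v_0$ to an adjacent vertex $v_1$: since edge orientations and weights do not depend on the root, the same analysis yields a Hamiltonian chain of weight-$1$ edges from $v_1$ to $v_0$, so the weight-$1$ edges close up into a $(2n+1)$-cycle and the weight-$(-t)$ edges into a complementary closed loop. The alternating cyclic orientation of the four edges at each vertex (Proposition~\ref{prop:coloring}) then forces each weight-$(-t)$ edge to join consecutive vertices of that cycle, which is precisely the standard $(2,2n+1)$ torus diagram; no appeal to the Tait flyping theorem is needed. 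Your fallback induction on $n$ remains a sketch: you would need to show that the unique leaf of $T_0$ actually sits in a clasp that can be resolved, that the resulting diagram is still reduced, alternating and prime, and that the Alexander polynomial drops to that of $T(2,2n-1)$; none of this is argued.
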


\proof
Let $D$ be a reduced alternating projection for a knot $K$ with $\Delta_K(t)=1+(-t)+(-t)^2+...+(-t)^{2n}$. 
Since all coefficients of powers of $-t$ are $+1$, each state has a different weight and
 $\Delta_K(t)$ is not a product of two alternating knot polynomials (c.f. \cite[Prop. 4.1]{OzSz}), hence $K$ is prime. 

Let $T_0$ be the state with the least $t$ power. 
Since $K$ is prime and the fact that each edge exchange changes the power of $-t$ by $\pm1$, 
 using Theorem~\ref{thm:connected}
 we get a linear ordering on the $2n+1$ states starting at $T_0$,  
 reaching each next state by exchanging an edge of weight $+1$ with an edge of weight $-t$. 
 
According to Proposition~\ref{prop:terminaledges} and due to this linear order, 
 $T_0$ and the top state $T_{2n}$ have only one terminal edge each,
 hence they have no branching, whereas
 intermediate states have $2$ terminal edges.

Since Proposition~\ref{prop:statewithprescribedterminalvertex} tells that each edge $v$
  (except the two that point to $v_0$)
  can be extended to a state having $v$ as a terminal edge, 
  and since we can reach that state from $T_0$ by positive edge exchanges, 
	we see that all edges in $T_0$ have weight $+1$.
We conclude that $T_0$ has $2n$ edges 
 since each edge of weight $+1$ is used only once in an edge exchange
 and no new edges emerge with weight $+1$ as we go from $T_0$ to $T_{2n}$.

Edge orientations and weights do not depend on the choice of the root vertex, 
  hence, after moving the root from $v_0$ to $v_1$,
	we still get a space of states with the same properties, in particular, 
	there will be a new state $T'_0$ containing a linear directed chain of $2n$ vertices starting
	at $v_1$, ending at $v_0$. Hence we get a cycle of length $2n+1$ of edges of weight $+1$.
Similarly, all remaining edges have weight $-t$, form a loop and are used in $T_{2n}$, 
	except the one pointing at the root.

\begin{center} 
\begin{figure}[ht]
\includegraphics{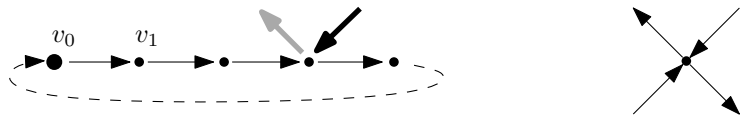} 
\caption{ Thick edges have weight $-t$. On the right, orientations at a typical node. }
\end{figure}
\end{center}

This information tells us that if there is an incoming edge of weight $-t$ at a vertex $v$,
 the next edge of weight $-t$ has to be on the same side of the loop of $+1$ edges due to the cyclic
 alternating orientation of edges at a vertex.
Since these edges with weight $-t$ form a loop as well, 
 they have to go between consecutive vertices. 
This gives us the diagram for the $(2,2n+1)$ torus knot.
\qed

\vspace{2mm}

\noindent {\em Acknowledgements.} 
I would like to thank Bedia Akyar for the invitation to give a talk
at Dokuz Eyl\"ul University, during which time period I started exploring the properties of the Crowell state space.
Most of the work was done during my time at Ferris State University. I would also like to thank Mahir Bilen Can
and Mohan Bhupal for providing feedback.
\vspace{2mm}

 \end{document}